\theoremstyle{plain}
\def\@opargbegintheorem#1#2#3{\par\addvspace{6pt plus3pt minus2pt}%
    \def\@tempa{#3}%
    \noindent{\bf #1 #2 \ifx\@tempa\empty\unskip\else\unskip\ (#3).\fi\hskip.5em}\csname#1font\endcsname\ignorespaces
%   \noindent{\bf #2 \ifx\@tempa\empty\unskip\else\unskip: #3\fi\hskip1em}\it
\ignorespaces}
\def\@endtheorem{\par\addvspace{6pt plus3pt minus2pt}}
\def\@begintheorem#1#2#3{\par\addvspace{8pt plus3pt minus2pt}%
              \noindent{\csname#1headfont\endcsname#1\ \ignorespaces#3 #2.}%
              \csname#1font\endcsname\hskip.5em\ignorespaces}
\def\@endtheorem{\par\addvspace{8pt plus3pt minus2pt}\@endparenv}
\newtheorem{theorem}{Theorem}[section]   %DON'T HAVE TO TAKE OUT [section]
\newtheorem{lemma}[theorem]{Lemma}
\newtheorem{proposition}[theorem]{Proposition}
\newtheorem{remark}[theorem]{Remark}
\DeclareMathAlphabet      {\mathbfit}{OML}{cmm}{b}{it}
\newcommand{\R}{\ensuremath{\mathbb{R}}}
\newcommand{\G}{\ensuremath{\mathcal{G}}}
\begin{document}

\title{ON CHIRALITY OF TOROIDAL EMBEDDINGS OF POLYHEDRAL GRAPHS}

\author{SENJA BARTHEL}

\address{EPFL Valais\\
Rue de l'Industrie 17\\1951 Sion, Switzerland}
              \email{senja.barthel@epfl.ch}

\maketitle

\begin{abstract}
We investigate properties of spatial graphs on the standard torus. It is known that nontrivial embeddings of planar graphs in the torus contain a nontrivial knot or a nonsplit link due to~\cite{short},\cite{explicit}. Building on this and using the chirality of torus knots and links~\cite{Murasugi1},\cite{Murasugi2}, we prove that nontrivial embeddings of simple 3-connected planar graphs in the standard torus are chiral. For the case that the spatial graph contains a nontrivial knot, the statement was shown by Castle~\textit{et al}~\cite{Hyde}.  We give an alternative proof using minors instead of the Euler characteristic. To prove the case in which the graph embedding contains a nonsplit link, we show the chirality of Hopf ladders with at least three rungs, thus generalising a theorem of Simon~\cite{Simon}.
\end{abstract}

\keywords{topological graphs; knots and links; chirality; topology and chemistry; templating on a toroidal substrate}

%\ccode{Mathematics Subject Classification 2000: 57M25, 05C10, 92E10}

\section{Introduction}
\label{intro}

The collaboration between mathematicians working in knot theory and topological graph theory and chemists working in stereochemistry has been very fruitful so far (\cite{SauvageAmabilino}-\cite{Flapan}). The spatial arrangement of a molecule can be modeled by a \textbf{spatial graph} $\mathcal{G}$, which is the image of an embedding $f:G\rightarrow \R^3$ of an abstract graph $G$ into $\R^3$ up to \textbf{ambient isotopies}; i.e., bending, stretching and shrinking of $\mathcal{G}$ without self-intersections is allowed as long as no edge is collapsed. The value of modeling a molecular structure by a spatial graph lies in the fact that topological properties of the spatial graph are inherited by the molecule. For example, topological chirality implies chemical chirality. A spatial graph is (topologically) \textbf{chiral} if it is not ambient isotopic to its mirror image. It is \textbf{achiral} otherwise; this is equivalent to the existence of an orientation-reversing homeomorphism of $\R^{3}$ that maps the spatial graph onto itself. A molecule with an underlying chiral graph is automatically a chiral molecule.

Castle, Evans and Hyde~\cite{Hyde} proved that polyhedral toroidal molecules which contain a nontrivial knot are chiral. A \textbf{polyhedral} molecule has an underlying graph which is planar, 3-connected and simple. A graph is \textbf{planar} if there exists an embedding of the graph in the sphere~$S^2$ (or equivalently in the plane~$\R^2$). Such an embedding is a \textbf{trivial embedding} and its image is a \textbf{trivial spatial graph}. A graph is \textbf{\textit{n}-connected} if at least $n$ vertices and their incident edges have to be removed to decompose the graph or to reduce it to a single vertex. A graph is \textbf{simple} if it has neither multiple edges between a pair of vertices nor loops from a vertex to itself. Nontrivial spatial graphs (as well as their corresponding molecules) which embed in the standard torus are called \textbf{toroidal}.

The argument given in~\cite{Hyde} to show the chirality of polyhedral toroidal molecules which contain a nonsplit link depends partly on a theorem of Simon~\cite{Simon} whose conditions unfortunately are not satisfied in~\cite{Hyde}. Simon's theorem states that the Hopf ladder with at least three rungs is chiral, assuming that sides are taken to sides. The \textbf{Hopf ladder $H_{n}$} is the spatial graph which is obtained from a Hopf link by adding $n$ pairs of vertices $(v^{1}_{1}, v^{2}_{1}), \dots , (v^{1}_{n}, v^{2}_{n})$, where the vertices $v^{1}_{1}, \dots , v^{1}_{n}$ lie on one link component and the vertices $v^{2}_{1}, \dots , v^{2}_{n}$ lie on the other, and by adding $n$ edges called \textbf{rungs} $e_{1}, \dots , e_{n}$, where $e_{i}$ has endpoints $v^{1}_{i}, v^{2}_{i}$ so that the rungs do not introduce any crossings in a diagram of the spatial graph as illustrated for $H_{2}$ and $H_{3}$ in Fig.~\ref{Hopfladders}. We generalise Simon's theorem with the following proposition:
\begin{proposition}\label{prop}
The Hopf ladder~$H_{n}$ with $n$ rungs is $\begin{cases} achiral &\mbox{if }\; 0 \leq n \leq 2, \\ 
 chiral & \mbox{if }\; 3 \leq n. \end{cases}$
\end{proposition}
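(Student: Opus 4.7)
The plan is to treat the two ranges of $n$ separately: achirality for $0 \le n \le 2$ by exhibiting explicit symmetries, and chirality for $n \ge 3$ by reducing to Simon's theorem.

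For $n \in \{0, 1, 2\}$, I would place $H_n$ symmetrically in $\R^3$ and exhibit an orientation-reversing involution of $\R^3$ that preserves the spatial graph setwise. For $H_0$ this is the classical achirality of the unoriented Hopf link, realised by a reflection composed with a rotation swapping the two components. For $H_1$ and $H_2$, positioning the rung(s) along a common axis and arranging the two sides as mirror images across a plane perpendicular to that axis provides an explicit reflection symmetry.

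For $n \ge 3$ the idea is to reduce to Simon's theorem: since Simon already proves chirality under the assumption that every orientation-reversing self-homeomorphism of $(\R^3, H_n)$ sends sides to sides, it suffices to prove that the set of sides is preserved by any such homeomorphism. Any self-homeomorphism of $(\R^3, H_n)$ induces an automorphism of the underlying abstract graph, and I would show that every such automorphism that extends to a spatial homeomorphism preserves the pair of sides; Simon's theorem then immediately yields chirality.

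I would verify this sides-to-sides property in cases. For $n = 3$ the two sides are the only triangles of the graph (any cycle using a rung has length at least $4$), so the conclusion is purely combinatorial. For odd $n \ge 5$ a direct cycle-length count shows that the sides are the only $n$-cycles of the graph: any cycle using $r \ge 2$ rungs has length at least $2r$, and the possible side-segment totals rule out total length $n$ when $n$ is odd by a parity argument. The hard case will be even $n$, especially $n = 4$, where the underlying graph is the edge-transitive cube graph and abstract automorphisms cannot distinguish the sides from the four ``rung-side'' $4$-cycles. Here I would invoke a linking-number argument: the two sides form a Hopf link with $|\mathrm{lk}| = 1$, whereas any other vertex-disjoint pair of cycles in $H_n$ consists of cycles that are null-homologous on the standard torus and bound disjoint disks there, so they have linking number $0$. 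Since the absolute value of the linking number of a disjoint pair of cycles is preserved by any homeomorphism of $\R^3$, the set of sides must be invariant, completing the reduction to Simon's theorem.
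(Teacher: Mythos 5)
Your proposal is correct in outline and shares the paper's pivotal step, but it diverges afterwards and is more elaborate than necessary. Both you and the paper must show that any self-homeomorphism of $(\R^3, H_n)$ preserves the union of the two sides; the paper does this uniformly for all $n \geq 3$ with precisely the linking-number observation you reserve for the ``hard'' even case: the sides form the only nontrivially linked pair of disjoint cycles in $H_n$, so the Hopf link they constitute is carried to itself. Once that argument is accepted, your combinatorial cycle-length counts for $n=3$ and odd $n$ become superfluous (they are correct, though: the side-arcs of a cycle through $2m$ rungs are single intervals alternating around each side, so its length lies in $\{2m+2O,\, 2m+n,\, 2m+2E\}$ with $O+E=n$, never equal to an odd $n$), and the case division on the parity of $n$ disappears. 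Where you genuinely differ is the endgame: you feed the sides-to-sides property back into Simon's theorem as its missing hypothesis (note sides-to-sides already gives rungs-to-rungs, the rungs being the complementary edge set), whereas the paper finishes self-containedly by observing that for $n\geq 3$ the cyclic order of the rung endpoints induces an orientation on each side, so an orientation-reversing homeomorphism would have to carry the oriented Hopf link to its mirror image, contradicting $\mathrm{lk}=\pm 1$. Your reduction is logically sound and shorter, but it buys brevity by reinstating dependence on Simon's proof, which is exactly the dependence the paper set out to remove; the paper's orientation argument is in effect a two-line replacement for the part of Simon's proof being invoked. Finally, your assertion that every other disjoint pair of cycles bounds disjoint disks on the torus needs the small extra check that a rung-using cycle which is not null-homologous on the torus (such cycles exist) uses so many vertices that no disjoint cycle remains; the paper asserts the corresponding fact with the same lack of detail, so this is a shared rather than a new gap.
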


This allows us to complete the proof in~\cite{Hyde} using a different method (minors instead of the Euler characteristic) and to obtain the following result:
\begin{theorem}[Chirality] \label{chiral}{\hspace{1mm}} \\
Nontrivial simple 3-connected planar graphs which are embedded in the standard torus~$T^2$ are chiral.
\end{theorem}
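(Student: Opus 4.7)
The plan is to combine the topological dichotomy of \cite{short,explicit} with the chirality of torus knots \cite{Murasugi1,Murasugi2} and with Proposition~\ref{prop}. Let $\mathcal{G}\subset T^2$ be a nontrivial embedding of a simple 3-connected planar graph $G$, and assume for contradiction that there is an orientation-reversing homeomorphism $h:\R^3\to\R^3$ with $h(\mathcal{G})=\mathcal{G}$. By \cite{short,explicit}, $\mathcal{G}$ contains either (a) a nontrivially knotted cycle $K$, or (b) two disjoint cycles $C_1,C_2$ whose union is a nonsplit link.

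In case (a), $K$ is an essential simple closed curve on $T^2$ and is nontrivially knotted, so it is a $(p,q)$-torus knot with $|p|,|q|\geq 2$ and hence chiral. The key new ingredient, compared with \cite{Hyde}, is to replace their Euler characteristic bookkeeping with an argument via graph minors: using 3-connectedness of $G$ and Whitney's theorem on uniqueness of the planar embedding of simple 3-connected planar graphs, I would isolate a minor $M$ of $G$ that records $K$ together with its attaching structure in a combinatorially rigid way. Any self-homeomorphism of $\mathcal{G}$ is then forced to preserve $M$ and, via $h$, would have to realise within $M$ both $K$ and the mirror torus knot $\overline{K}$ as cycles on the same standard torus. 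Ruling this out amounts to showing that the resulting rotation system on $M$ requires a $K_5$ or $K_{3,3}$ minor in $G$, contradicting planarity.

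In case (b), the two components $C_1,C_2$ are disjoint essential simple closed curves on $T^2$, hence parallel, and together bound two annuli in $T^2$. For the model Hopf link case $(p,q)=(1,1)$, Menger's theorem applied to the 3-connected graph $G$ furnishes at least three internally disjoint paths in $G$ between $V(C_1)$ and $V(C_2)$; a short argument using planarity and the torus embedding lets one choose these paths to lie in a single annular component of $T^2\setminus(C_1\cup C_2)$, so that together with $C_1,C_2$ they form a subdivision of the spatial Hopf ladder $H_n$ for some $n\geq 3$, embedded on $T^2$ exactly as in Fig.~\ref{Hopfladders}. Whitney rigidity again forces $h$ to preserve this Hopf-ladder minor setwise and to descend to an orientation-reversing self-homeomorphism of $H_n$ in $\R^3$, contradicting Proposition~\ref{prop}. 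The remaining torus-link slopes $(p,q)\neq(1,1)$ either produce nontrivially knotted components (and so reduce to case (a)) or can be handled by the same Hopf-ladder extraction applied to a suitable sub-link.

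The main obstacle I expect is the passage from chirality of the substructure (the torus knot in (a) or the Hopf ladder in (b)) to chirality of the ambient $\mathcal{G}$, since an arbitrary orientation-reversing self-homeomorphism of $\mathcal{G}$ need not fix the distinguished cycle $K$ or the pair $(C_1,C_2)$ setwise. Choosing a minor that is canonically determined by the combinatorics of $G$, and therefore preserved by every graph automorphism, is precisely what resolves this: Whitney's theorem guarantees that any self-homeomorphism of the spatial graph $\mathcal{G}$ induces a combinatorial automorphism of $G$, which then lifts to an orientation-reversing self-homeomorphism of the chosen minor and triggers the desired contradiction.
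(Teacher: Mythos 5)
Your high-level skeleton (knot/link dichotomy, chirality of torus knots and of the Hopf ladder, detection of a Kuratowski minor) matches the paper, but the mechanism you propose for passing from the chiral substructure to a contradiction is exactly where the real work lies, and your version of it does not go through. You invoke Whitney's theorem to claim that a self-homeomorphism of $\mathcal{G}$ must preserve a ``canonically chosen'' minor containing the knotted cycle $K$ (resp.\ the Hopf ladder) setwise, so that $h$ restricts to an orientation-reversing symmetry of that minor. Whitney's theorem gives uniqueness of the combinatorial embedding of a simple 3-connected planar graph; it does not give a canonical cycle or minor fixed by every automorphism --- an automorphism of $G$ may send $K$ to an entirely different knotted cycle, and there is no reason a minor ``recording $K$'' is automorphism-invariant. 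The paper sidesteps this entirely via Remark~\ref{K}: one only needs that $h(K)$ is \emph{some} subgraph of $\mathcal{G}$ lying on the same torus and realizing the mirror knot (or link) type. The real content is then Lemmas~\ref{L1} and~\ref{L2}: any spatial graph on the torus containing both $T(p,q)$ and a torus knot of opposite sign has a $K_5$ minor, and any one containing both $T(kp,kq)$ and $T(kp,-kq)$ has a $K_{3,3}$ minor. These are nontrivial combinatorial arguments (an induction on the graphs $S(p,q)$ in the $K_5$ case), which your sentence ``ruling this out amounts to showing that the resulting rotation system on $M$ requires a $K_5$ or $K_{3,3}$ minor'' asserts rather than proves.

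Two further concrete gaps. First, your case analysis for links is incomplete: a nonsplit torus link such as $T(2,4)$ has unknotted components and is not the Hopf link, so it neither reduces to your case (a) nor admits a ``Hopf-ladder extraction from a sub-link'' (its only two-component sublink is itself). The paper treats this in Case~2a: such a link has at least three crossings, hence is chiral by Murasugi, and Lemma~\ref{L2} applies. Second, in the Hopf-link case, even after you have produced $H_3\subseteq\mathcal{G}$ (the Menger-type argument for three rungs is fine and matches~\cite{Hyde}), the contradiction with Proposition~\ref{prop} requires $h(H_3)=H_3$, which again is not forced. The paper's Lemma~\ref{L3} instead classifies how an achiral graph on the torus could contain $H_3$ --- either it also contains the mirror oriented Hopf link $T(2,-2)$, reducing to Lemma~\ref{L2}, or the orientation on the Hopf link induced by the rungs is destroyed by an extra edge, which already creates a $K_{3,3}$ minor --- and concludes nonplanarity in both cases. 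You would need to supply arguments of this kind for your proof to close.
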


For the proof we rely on the fact that all planar toroidal spatial graphs contain a nontrivial knot or a nonsplit link:

\begin{theorem}[Existence of knots and links~\cite{short},\cite{explicit}] \label{goal}{\hspace{1mm}} \\
Let $G$ be an planar graph and $f:G\rightarrow \R^3$ be a nontrivial embedding of $G$ with image~$\mathcal{G}$.  If $\mathcal{G}$ is contained in the torus~$T^2$, it contains a subgraph which is a nontrivial knot or a nonsplit link.
\end{theorem}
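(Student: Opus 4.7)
The plan is to argue by contradiction: assume $\mathcal{G}=f(G)\subset T^2$ contains neither a nontrivial knot nor a nonsplit link, and show that the embedding $f$ is then isotopic to one in $S^2\subset\mathbb{R}^3$, contradicting nontriviality.

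I would first classify the cycles of $G$ by their homology class in $H_1(T^2)\cong\mathbb{Z}^2$. Each cycle maps under $f$ to an embedded simple closed curve on $T^2$, so its class is either trivial or primitive. If every cycle of $G$ is null-homotopic in $T^2$, a standard innermost-disk/compression argument applied to the faces of $\mathcal{G}$ in $T^2$ isotopes $\mathcal{G}$ into an embedded disk in $T^2$ and thence into $S^2$, contradicting nontriviality. Consequently some cycle $\gamma\subset\mathcal{G}$ has primitive class $(p,q)$ and realizes the $(p,q)$-torus knot in $\mathbb{R}^3$. By the Murasugi theorems cited in the introduction, $\gamma$ is a nontrivial knot whenever $\min(|p|,|q|)\ge 2$, contradicting our assumption; hence $\gamma$ must be a $(p,\pm 1)$- or $(\pm 1,q)$-curve, and an unknot in $\mathbb{R}^3$.

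Next, I would cut $T^2$ along $\gamma$ to obtain an annulus $A$ into which $\mathcal{G}\setminus\gamma$ embeds, and examine the cycles of this subgraph. If every cycle of $\mathcal{G}\setminus\gamma$ is null-homotopic in $A$, repeated compression again isotopes $\mathcal{G}$ into a regular neighborhood of $\gamma$ and produces a planar embedding. Otherwise there is a cycle $\gamma'\subset\mathcal{G}\setminus\gamma$ disjoint from $\gamma$ and generating $H_1(A)$, hence parallel to $\gamma$ on $T^2$; then $\gamma\sqcup\gamma'$ is a two-component link of parallel $(p,q)$-torus knots with linking number equal to the torus framing $pq$, which gives a nonsplit link whenever $pq\neq 0$.

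The main obstacle is therefore the residual case $(p,q)\in\{(\pm 1,0),(0,\pm 1)\}$, where parallel companions have linking number $0$ and no nonsplit link is automatic. Here I would use the planarity of $G$ substantively: since $G$ admits no $K_5$- or $K_{3,3}$-minor, a careful combinatorial analysis of the faces of $\mathcal{G}$ in $T^2$ — together with the fact that the torus embedding cannot be cellularly realized as a planar one — should force either a cycle of $G$ with richer homology class, contradicting the meridian/longitude restriction, or two cycles whose classes in $H_1(T^2)$ are algebraically independent and hence must meet on $T^2$, with planarity ``resolving'' the intersection into a Hopf-link configuration inside $\mathcal{G}$. This delicate meridian/longitude case, where the two cited references \cite{short} and \cite{explicit} diverge methodologically, is where I expect the bulk of the difficulty of the proof to lie.
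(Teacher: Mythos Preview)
The paper does not prove Theorem~\ref{goal}: it is quoted from the author's earlier papers \cite{short} and \cite{explicit} and used only as an input to the proof of Theorem~\ref{chiral}. There is no in-paper argument to compare your proposal against.

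On its own merits, your outline is plausible through the reduction to unknotted essential curves, but it has two genuine gaps. First, the annulus step is incomplete: from ``every cycle of $\mathcal{G}\setminus\gamma$ is null-homotopic in $A$'' you cannot conclude that $\mathcal{G}$ isotopes into a neighborhood of $\gamma$. Edges of $\mathcal{G}$ not in $\gamma$ may run as arcs in $A$ from one boundary component to the other; such an arc, concatenated with a subarc of $\gamma$, gives a cycle of $\mathcal{G}$ whose class in $H_1(T^2)$ is \emph{not} a multiple of $[\gamma]$, and your dichotomy ``parallel disjoint cycle vs.\ compress to a neighborhood of $\gamma$'' simply misses this. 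Second, the terminal meridian/longitude case is not a proof but a hope. Your proposed mechanism, that planarity should ``resolve'' the intersection of two homologically independent cycles into a Hopf-link configuration, does not work as stated: two cycles of $\mathcal{G}\subset T^2$ with algebraically independent classes must intersect on $T^2$, hence share a vertex of the graph, and two circles sharing a vertex are not a link at all. What one actually needs here is either a proof that the embedding is trivial when the image of $H_1(G)\to H_1(T^2)$ is cyclic of rank one generated by a meridian (respectively longitude), or, when the image has rank two, a construction of \emph{disjoint} cycles forming a nonsplit link---and it is exactly at this point that the planarity hypothesis must do real work, which your sketch does not supply.
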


\section{The proofs}
The first part of this section contains the outline of the proof of Theorem~\ref{chiral} and preparations for it, in particular the generalisation of Simon's theorem on the chirality of Hopf ladders. The second subsection proves Theorem~\ref{chiral}.
\subsection{Outline of the proof and chirality of Hopf ladders}
The proof of Theorem~\ref{chiral} uses Theorem~\ref{goal}, Proposition~\ref{prop} and the following three statements.
\begin{theorem}[Chirality of torus knots and torus links~\cite{Murasugi1},\cite{Murasugi2}] \label{Murasugi}{\hspace{1mm}}\\
Torus knots and torus links with at least three crossings are chiral.
\end{theorem}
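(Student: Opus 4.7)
The plan is to exhibit an invariant that changes sign under mirror reflection and to check that it is nonzero on every torus knot or link with at least three crossings. The classical such invariant is the link signature $\sigma(L)$, read off from the symmetrisation $V + V^{T}$ of any Seifert matrix $V$: one has $\sigma(L^{*}) = -\sigma(L)$ for the mirror $L^{*}$, so $\sigma(L) \neq 0$ forces chirality immediately. This is Murasugi's original strategy, so the proof reduces to a signature computation on a standard Seifert surface for $T(p,q)$.

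First I would fix the surface obtained by Seifert's algorithm applied to the closure of the positive torus braid $(\sigma_{1}\sigma_{2}\cdots\sigma_{p-1})^{q}$; it is a plumbing of twisted bands whose $H_{1}$ has an explicit basis indexed by pairs $(i,j)$ with $1 \le i \le p-1$ and $1 \le j \le q-1$. The linking numbers of pushed-off basis cycles give a Seifert matrix $V$ whose symmetrisation is block-tridiagonal. Diagonalising $V + V^{T}$ and tallying signs of eigenvalues yields a closed arithmetic expression for $\sigma(T(p,q))$; Murasugi packages this as a signed count over lattice points in a region of the form $\{(i,j) : 0 < i/p + j/q < 1\}$ minus its reflection.

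The main obstacle is showing that this signed count never vanishes in the range of interest. The cleanest route is induction, using the recursion $\sigma(T(p,q)) = \sigma(T(p, q - 2p)) + c(p,q)$ that comes from the standard genus-reducing deformation of the torus braid Seifert form, with base cases $\sigma(T(2,q)) = -(q-1)$ for $q \geq 3$ (and the analogous link base case when $\gcd(p,q) > 1$). The hypothesis ``at least three crossings'' is exactly what is needed to exclude the short list of achiral or trivial members of the $T(p,q)$ family, namely the unknot, the two-component unlink, and the Hopf link; everything else with $\min(p,q) \geq 2$ has crossing number $\geq 3$ and falls inside the signature calculation.

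An alternative I would keep in reserve, avoiding the eigenvalue bookkeeping, is the Jones polynomial, which transforms as $V_{L^{*}}(t) = V_{L}(t^{-1})$. Inserting the closed form
\[
V_{T(p,q)}(t) \;=\; \frac{t^{(p-1)(q-1)/2}}{1 - t^{2}}\bigl(1 - t^{p+1} - t^{q+1} + t^{p+q}\bigr)
\]
and comparing the maximal and minimal exponents on either side shows that the polynomial fails to be palindromic whenever $p, q \geq 2$ with $(p,q) \neq (2,2)$, which again settles chirality in the full range required by the theorem.
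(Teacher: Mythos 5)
This theorem is not proved in the paper at all: it is quoted from Murasugi's two papers on the Jones polynomial (and, for the signature route, his earlier 1965 work), so there is no internal argument to compare yours against. Your sketch points at exactly the right literature, and for torus \emph{knots} it is sound: $\sigma(T(p,q))\neq 0$ for coprime $p,q\geq 2$, or equivalently $\min\deg V_{T(p,q)}=(p-1)(q-1)/2>0$ while amphichirality of a knot forces $\min\deg V=-\max\deg V\leq 0$; the hypothesis on crossings exactly excludes the unknot and the Hopf link.

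The genuine gap is in the torus \emph{link} case, which the paper actually needs (Case~2a of Theorem~\ref{chiral} requires that $T(kp,kq)$ and $T(kp,-kq)$ be distinct as \emph{unoriented} links). Both invariants you invoke are invariants of \emph{oriented} links. The identity $\sigma(L^{*})=-\sigma(L)$ only excludes an orientation-reversing homeomorphism that restores the chosen component orientations; an unoriented symmetry may permute and reverse components, so ``$\sigma\neq 0$ forces chirality immediately'' is immediate only for knots. For a $k$-component torus link you must control the signatures of all $2^{k-1}$ orientation classes, not just the parallel one. The Jones polynomial has the same defect: reversing a sublink of total linking number $\lambda$ with its complement multiplies $V_{L}$ by $t^{-3\lambda}$, so ``palindromic'' is only defined up to such a shift and the bare comparison of extreme degrees no longer closes the argument. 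Moreover, the closed formula you quote for $V_{T(p,q)}$ holds only when $\gcd(p,q)=1$, so it says nothing about the links $T(kp,kq)$, $k\geq 2$, that the theorem must cover. As written, your proposal establishes the knot half of the statement and only the oriented-chirality version of the link half; the missing orientation bookkeeping is precisely what Murasugi's cited papers supply.
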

\begin{remark}[A necessary condition for a spatial graph to be achiral]\label{K}
Let $K(\mathcal{G})$ be the set of all knots and links up to ambient isotopy contained as subgraphs in the spatial graph~$\mathcal{G}$. If $\mathcal{G}$ is achiral, every topological chiral element in the set $K(\mathcal{G})$ can be deformed into the mirror image of an element in the set $K(\mathcal{G})$.
\end{remark}

Remark~\ref{K} says that if an achiral spatial graph $\mathcal{G}$ contains a chiral spatial subgraph $\mathcal{K}$, it must also contain the mirror image $\mathcal{K'}$ of $\mathcal{K}$. Note that $\mathcal{K}$ and $\mathcal{K'}$ need not be disjoint in general but are allowed to share edges and points.

To set the stage for Theorem~\ref{Kuratowski} and the proofs in Section~\ref{proofs}, we introduce minors:
An abstract graph~$G'$ is a \textbf{minor} of $G$ if it is obtained from $G$ by a sequence of deleting and contracting edges and deleting isolated vertices.
Similarly, a spatial graph $\G'$ is a minor of a spatial graph $\G$ if $\G'$ is obtained from $\G$ by deletion and contraction of edges and deletion of isolated vertices. \textbf{Contraction} along an edge $e$ of a spatial graph $\G$ means shrinking $e$ to a point while keeping the edges which are attached to the endpoints of $e$ attached. Contraction of an edge is only defined for edges which are not loops.
\begin{theorem}[Planarity criterion~\cite{Kuratowski}]\label{Kuratowski}{\hspace{1mm}}\\
A graph is planar if and only if it contains neither $K_5$ nor $K_{3,3}$ as minors.
\end{theorem}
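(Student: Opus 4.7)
The plan is to prove the equivalence in two directions. The easy direction requires showing that $K_5$ and $K_{3,3}$ are themselves nonplanar and that planarity is preserved under taking minors. Nonplanarity of $K_5$ and $K_{3,3}$ is the standard consequence of Euler's formula: a simple planar graph on $v$ vertices has at most $3v-6$ edges, which $K_5$ violates, and a triangle-free planar graph has at most $2v-4$ edges, which $K_{3,3}$ violates. Preservation of planarity under deletion of edges and isolated vertices is immediate from any planar embedding, and under contraction one shrinks the edge inside a small disk neighbourhood. Composing, any graph with a $K_5$ or $K_{3,3}$ minor is nonplanar.

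For the converse, I would argue by contradiction: suppose some nonplanar graph has neither $K_5$ nor $K_{3,3}$ as a minor, and pick such a $G$ with the minimum number of edges. First I would show $G$ must be $3$-connected. If $G$ had a vertex of degree at most two, deleting or contracting an incident edge gives a smaller nonplanar graph with no forbidden minor, contradicting minimality. If $G$ had a $1$- or $2$-cut, I would split $G$ along the cut into smaller pieces, each planar by minimality, and reassemble planar embeddings of the pieces (placing the cut vertices on a common face) into a planar embedding of $G$, again a contradiction.

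With $G$ $3$-connected, I would invoke Tutte's contractible edge theorem: every simple $3$-connected graph on at least five vertices has an edge $e$ whose contraction $G/e$ remains $3$-connected. By edge-minimality of the counterexample, $G/e$ is either planar or contains a forbidden minor; the latter is impossible because any minor of $G/e$ is also a minor of $G$. Hence $G/e$ is $3$-connected and planar, and by Whitney's theorem its combinatorial embedding in $S^2$ is essentially unique. Let $w$ be the vertex of $G/e$ produced by contracting $e=xy$. In the unique embedding, the neighbours of $x$ and of $y$ appear in some cyclic order around $w$. The final step is a case analysis on how these two sets interleave: if they can be separated into two arcs on the rotation at $w$, one rebuilds a planar embedding of $G$ by reopening $e$; otherwise the interleaving forces either three $x$-neighbours alternating cyclically with three $y$-neighbours (yielding a $K_{3,3}$ minor) or a sufficiently rich pattern of shared incidences (yielding a $K_5$ minor).

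The main obstacle is exactly this last step: a careful and complete case analysis of all interleaving patterns of the $x$-neighbours and $y$-neighbours around $w$, ruling out every planar reconstruction and exhibiting an explicit $K_5$ or $K_{3,3}$ minor in each surviving case. The structural reductions to the $3$-connected setting and the use of Tutte's and Whitney's theorems are comparatively routine; the combinatorial dissection of the rotation at $w$ is where the argument has substance.
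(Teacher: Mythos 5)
The paper does not prove this statement at all: Theorem~\ref{Kuratowski} is imported as a classical black box with a citation to Kuratowski~\cite{Kuratowski}, and the only direction ever used in the paper is the easy one (a graph with a $K_5$ or $K_{3,3}$ minor is nonplanar, applied to the extensions built in Lemmas~\ref{L1}--\ref{L3}). So your proposal is not an alternative to an argument in the paper but a reconstruction of the standard proof of the minor form of the theorem --- strictly Wagner's theorem; Kuratowski's original version concerns subdivisions, though the two are equivalent for this pair since $K_{3,3}$ has maximum degree three and a $K_5$ minor forces a $K_5$ or $K_{3,3}$ subdivision. Your outline follows the usual Thomassen--Diestel route and is sound in structure: the Euler-formula counts for the easy direction are correct, the reduction of a minimal counterexample to the $3$-connected case is standard, and Tutte's contractible-edge lemma plus the (essentially unique) embedding of the $3$-connected planar graph $G/e$ is the right machinery. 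Two points deserve flagging. First, in the $2$-cut reduction, ``placing the cut vertices on a common face'' is not automatic: you need to add a virtual edge between the two cut vertices to each piece (harmless, as it is realized by a path through the other piece, so no new forbidden minor appears) to force them onto a common face before gluing. Second, as you yourself note, the entire combinatorial weight of the theorem sits in the final analysis of how the neighbours of $x$ and $y$ interleave on the face boundary around the contracted vertex $w$; the dichotomy you name (three cyclically alternating neighbour pairs yield a $K_{3,3}$ subdivision, three common neighbours together with the edge $xy$ yield a $K_5$ subdivision, and otherwise the $y$-neighbours occupy a single segment between consecutive $x$-neighbours so that $e$ can be re-expanded inside the face) is the correct resolution, but as written it is a plan rather than a completed proof. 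For the purposes of this paper, only the Euler-formula half of your argument is actually load-bearing.
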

\vskip 5pt
\noindent
\textbf{Outline of the proof of Theorem~\ref{chiral}:}\\
The idea of the proof is to see that a simple $3$-connected planar~spatial graph $\G$ which is nontrivially embedded in $T^2$ contains a chiral subgraph which cannot be extended to an achiral spatial graph by adding vertices and edges on the torus without losing its planarity. Theorem~\ref{goal} ensures the existence of nontrivially knotted or nonsplit linked subgraphs of $\G$. This allows a proof in two cases: Case~1 deals with the case in which $\G$ contains a nontrivial knot whereas Case~2 covers the case in which $\G$ contains a nonsplit link. The second case has to further distinguish between the possibilities that either the nonsplit link is different from the Hopf link (Case~2a) or the Hopf link (Case~2b).

In Case~1, the knot is chiral by Theorem~\ref{Murasugi}. For $\G$ to be achiral, the mirror image of the knot must also be a subgraph of $\G$ by Remark~\ref{K}. Therefore, we extend any nontrivial torus knot to a spatial graph~$\mathcal{K}$ so that the criterion of Remark~\ref{K} is satisfied. We show in Lemma~\ref{L1} that $K_5$ is a minor of every $\mathcal{K}$ which is constructed in such a way. It follows from Theorem~\ref{Kuratowski} that~$\mathcal{K}$ is nonplanar. This proves Theorem~\ref{chiral} if $\G$ contains a nontrivial knot. 

In Case~2a, the same argument as for the knotted case proves the theorem. The only difference is that here $K_{3,3}$ (instead of $K_5$) is a minor of the extension $\mathcal{K}$ as shown in Lemma~\ref{L2}.

In the remaining Case~2b, simpleness and 3-connectivity ensure the existence of the Hopf ladder with three rungs (Fig.~\ref{Hopfladders}, right) as shown by~\cite{Hyde}. The Hopf ladder with three rungs is chiral as shown in Proposition~\ref{prop}. Here Lemma~\ref{L3} ensures that $K_{3,3}$ is a minor of any achiral extension $\mathcal{K}$ of the Hopf ladder with three rungs.
\vskip 5pt
The idea of the argument is similar to the one given by Castle, Evans and Hyde~\cite{Hyde} who showed the statement of Theorem~\ref{chiral} in the case that the spatial graph contains a nontrivial knot. But while the argument in~\cite{Hyde} uses the Euler characteristic, we detect one of Kuratowski's nonplanar minors $K_{3,3}$ and $K_5$ in the achiral extensions. There is a gap in the proof in~\cite{Hyde} for the case that the spatial graph contains a nonsplit link: The argument given there depends on the mistaken presumption that Simon~\cite{Simon} proved the chirality of Hopf ladders with at least three rungs. Unfortunately, Simon's proof assumes that rungs go to rungs and sides go to sides, which is not given in general. We fill the gap by showing the chirality of the Hopf ladder with at least three rungs in Proposition~\ref{prop} without making Simon's extra assumptions. 
\vskip 10pt
\noindent
\textbf{Proposition~\ref{prop}}\\
\textit{The Hopf ladder~$H_{n}$ with $n$ rungs is $\begin{cases} achiral &\mbox{if }\; 0 \leq n \leq 2, \\ 
 chiral & \mbox{if }\; 3 \leq n. \end{cases}$}
\begin{proof}
The Hopf link~$H$ is achiral as unoriented link but chiral if oriented, which can be easily confirmed by calculating the linking number. One or two rungs do not determine an orientation on the link. Since there exists a symmetric representation of the Hopf ladder with one or two rungs (Fig.~\ref{Hopfladders}, left), these Hopf ladders are achiral. We prove the chirality of the Hopf ladder $H_{n}$ with $n \geq 3$ rungs by contradiction. Recall that a spatial graph is achiral if there exists an orientation-reversing homeomorphism~$h$ of~$\R^3$ which maps the spatial graph onto itself. Assume there exists an orientation-reversing homeomorphism~$h$ of~$\R^3$ with $h(H_{n})=H_{n}, n \geq 3$. Every homeomorphism~$h$ maps nontrivially linked subgraphs onto nontrivially linked subgraphs. Since the Hopf link~$H$ which consists of the sides of~$H_{n}$ is the only nontrivially linked subgraph of~$H_{n}$, it follows that~$H$ is mapped onto itself, i.e., $h(H)=H$. Since $h$ reverses the orientation, we have $h(H)=H^{\star}$, where $H^{\star}$ is the mirror image of $H$. The Hopf ladder $H$ is chiral since more than two rungs $e_{1}, \dots , e_{n}$  with endpoints $v_{1}, v'_{1}, \dots , v_{n}, v'_{n}$ determine orientations on the sides of the Hopf ladder (Fig.~\ref{Hopfladders}, right: $v_{1}, \dots , v_{n}= 1,2,3$, $v'_{1}, \dots , v'_{n}= 4,5,6$) by the order of the endpoints of the rungs $v_{1}, \dots , v_{n}$ respectively $v'_{1}, \dots , v'_{n}$, therefore implying that $H$ is different from~$H^{\star}$. This contradicts the existence of $h$ with $H=h(H)=H^{\star}$ and it follows that the Hopf ladder $H_{n}$, with $n \geq 3$, rungs is chiral.
\end{proof}

\begin{figure}[th]
	\centering
	\def\svgwidth{350pt}
	 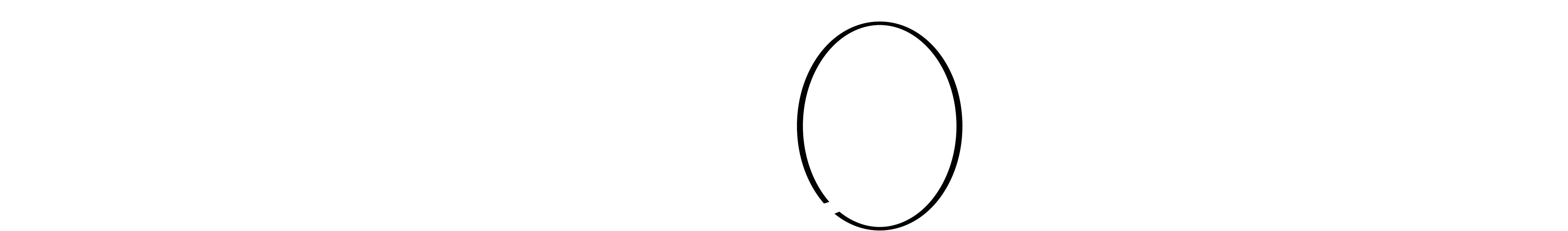
\vspace*{8pt}
\caption{Left: The Hopf ladder with less than three rungs is achiral. It is neither simple nor 3-connected. Right: The Hopf ladder with three or more rungs is chiral by~Proposition~\ref{prop}.}
\label{Hopfladders}
\end{figure}

\subsection{Proof of Theorem~\ref{chiral}}\label{proofs}
The following three lemmas will be used in the proof of Theorem~\ref{chiral} to show that achiral extensions of knots, chiral links and $H_{3}$ on the torus fail to be planar. 
\begin{lemma}{\hspace{1mm}}\label{L1}\\ 
\textit{$K_5$ is a minor of every spatial graph on the torus which contains both knots $T(p,q)$ and $T(r,-s)$ for some relatively prime integers $p\geq 2,q\geq 3$ and relatively prime integers $r\geq 1,s\geq 1$.}
\end{lemma}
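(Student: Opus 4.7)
My plan is to reduce the claim to an assertion about the abstract $4$-regular subgraph $K_{1}\cup K_{2}$ of $\mathcal{K}$, where $K_{1}\cong T(p,q)$ and $K_{2}\cong T(r,-s)$ are the two knot cycles, and then to exhibit $K_{5}$ as a minor of this subgraph. First, the two subgraphs --- as simple closed curves on $T^{2}$ with homology classes $(p,q)$ and $(r,-s)$ in $H_{1}(T^{2};\mathbb{Z})=\mathbb{Z}^{2}$ --- meet on $T^{2}$ in at least $|{-ps-qr}|=ps+qr$ points, their algebraic (hence minimal geometric) intersection number. The hypotheses $p\geq 2,\;q\geq 3,\;r,s\geq 1$ give $ps+qr\geq 5$, and the same bound holds for every admissible representative of the two knot types on the torus (one checks similarly the intersection numbers $pr+qs$, $qr+ps$, $qs+pr$). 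Since $\mathcal{K}$ is embedded on $T^{2}$, every such intersection is a common vertex of $\mathcal{K}$, so $K_{1}$ and $K_{2}$ share $n:=ps+qr\geq 5$ vertices.

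Next I would identify the abstract structure of $K_{1}\cup K_{2}$. Label the $n$ common vertices $v_{0},\ldots,v_{n-1}$ in cyclic order along $K_{1}$, and parametrise both knots as closed geodesics on the flat torus. Solving the resulting congruences $pt\equiv ru\pmod{1}$, $qt\equiv -su\pmod{1}$ by means of a B\'ezout identity $ar+bs=1$ shows that the common points occur at parameter values $k/n$ on $K_{1}$ and that along $K_{2}$ they appear in the cyclic order $v_{0},v_{c},v_{2c},\ldots,v_{(n-1)c}$, where $c:=(ap-bq)^{-1}\in(\mathbb{Z}/n\mathbb{Z})^{\times}$. The two homology classes are linearly independent over $\mathbb{Q}$ (otherwise $ps+qr$ would be zero, contradicting positivity), which forces $c\not\equiv\pm 1\pmod{n}$. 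Thus $K_{1}\cup K_{2}$ is, as an abstract graph, the circulant $C_{n}(1,c)$, and it remains to exhibit $K_{5}$ as a minor of $C_{n}(1,c)$.

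For the minor construction, if $n=5$ then necessarily $p=2,\;q=3,\;r=s=1$ and $c\in\{2,3\}$, so $C_{5}(1,c)=K_{5}$ directly. If $n>5$, I would partition $\{v_{0},\ldots,v_{n-1}\}$ into five internally connected branch sets $B_{1},\ldots,B_{5}$, each a short arc of $K_{1}$ or of $K_{2}$, such that every pair $B_{i},B_{j}$ is joined by at least one edge of $K_{1}\cup K_{2}$; contracting each $B_{i}$ to a point then yields $K_{5}$. The recipe I would try is to pick five anchor vertices whose $K_{2}$-cyclic order is a pentagram rearrangement of their $K_{1}$-cyclic order, so that after contracting the intervening arcs the two Hamilton cycles of $C_{n}(1,c)$ deliver the outer $5$-cycle and the inner pentagram of $K_{5}$ respectively; each of the remaining $n-5$ vertices is then absorbed into a neighbouring anchor branch set along a $K_{1}$- or $K_{2}$-edge. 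The main obstacle I expect is that different residues of $c$ modulo $n$ (equivalently, different admissible quadruples $(p,q,r,s)$) may require slightly different anchor placements and absorption patterns, so a short case analysis --- exploiting $\gcd(c,n)=1$ and the $K_{2}$-jump $\pm c$ to generate the missing edge between any two branch sets which are not $K_{1}$-consecutive --- is needed to complete the proof in every case.
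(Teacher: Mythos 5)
Your setup is genuinely different from the paper's: you keep both knot cycles and identify their union as the circulant $C_n(1,c)$ on $n=ps+qr$ vertices, whereas the paper first uses an explicit path $\pi$ through $T(p,q)\otimes T(r,-s)$ to delete and contract its way down to $T(p,q)\otimes T(1,-1)$, whose abstract graph is $S(p,q)=C_{p+q}(1,p)$, and only then hunts for $K_5$. Your identification of the intersection pattern (at least $ps+qr\geq 5$ crossing vertices, arithmetic-progression reordering along the second curve) is sound for curves in minimal position. But the proposal has a genuine gap, and it sits exactly where the paper does almost all of its work: you never actually prove that $K_5$ is a minor of $C_n(1,c)$ for every admissible pair $(n,c)$ with $n>5$. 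The "five internally connected branch sets, pentagram anchor placement, short case analysis" paragraph is a strategy, not an argument; the paper's entire minor/invert/rename induction on $S(p_i,q_i)$ exists precisely because this step is delicate, and your version is if anything harder, since your family of circulants (arbitrary $n=ps+qr$ and arbitrary units $c$) is strictly larger than the paper's family $C_{p+q}(1,p)$. Note also that not every nonplanar $4$-regular circulant argument is automatic: $C_n(1,2)$ with $n$ even is planar (e.g.\ the octahedron $C_6(1,2)$), so you must genuinely use $\gcd(c,n)=1$ and the arithmetic constraints; a clean way to close the gap would be the paper's induction, or alternatively $4$-connectivity of these circulants together with Wagner's characterisation of graphs with no $K_5$ minor, but neither is in your text.

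A second, smaller error: you claim that linear independence of the homology classes over $\mathbb{Q}$ forces $c\not\equiv\pm 1\pmod n$. That implication is false --- the classes $(1,2)$ and $(1,-1)$ are independent, yet they give $n=3$ and hence $c\equiv\pm 1$, with a planar union. What rules out $c\equiv\pm 1$ here is the numerical hypothesis $p\geq 2$, $q\geq 3$ (in the paper's normal form, $c=p$ with $2\leq p\leq p+q-2$ modulo $p+q$), and this needs to be argued from those hypotheses, not from independence. Until both points are repaired, the proposal does not establish the lemma.
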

\begin{proof}
Let $T(a,b)\otimes T(c,d)$ be the spatial graph which is constructed by embedding $T(a,b)$ and $T(c,d)$ with minimal number of intersections in the torus and by adding vertices at the intersection points. (For example, $T(2,3)\otimes T(2,-5)$ is drawn on the left of Fig.~\ref{ErsterFall} and $T(2,3)\otimes T(1,-1)$ is drawn on the right.) 
View the torus as the rectangle $[0,1] \times [0,1]$ with opposite sides identified. A meridian is given as $\{x\} \times [0,1]$ and a longitude as $[0,1] \times \{y\}$. Without loss of generality, arrange $T(p,q)$ and $T(r,-s)$ on the rectangle in such a way that no vertex lies on the boundary of the rectangle and that $T(r,-s)$ runs through its corner point.

Then there exists a path $\pi$ in $T(p,q) \otimes T(r,-s)$ from the corner point $(0,1)$ to the corner point $(1,0)$ of the rectangle which does not intersect the boundary of the rectangle in any other points and which respects the orientations of $T(p,q)$ and $T(r,-s)$ (fat zig-zag in Fig.~\ref{ErsterFall}, left). The existence of $\pi$ can be seen as follows (compare Fig.~\ref{ErsterFall} for notation): If $r=s=1$, set $\pi = T(r,-s)$. Otherwise, there are exactly two segments of $T(r,-s)$ in the interior of the rectangle which are connected to the corner point. Denote by $S_{1}$ the segment with endpoints $(0,1)$ and $s_{1}$ and denote by $S_{n}$ the segment with endpoints $s_{n}$ and $(1,0)$. By construction, $s_{1}$ lies on $]0,1[ \times \{0\}$ and $s_{n}$ on $]0,1[ \times \{1\}$. Consider the region $B$ of the rectangle which is bounded by the two segments $S_{1}$ and $S_{n}$, $[(0,1), s_{n}]$ and $[s_{1}, (1,0)]$ (shaded region in Fig.~\ref{ErsterFall}). It follows that $T(r,-s) \cap \mathring{B}$ is either empty or the union of segments $S_{i}$, $1 < i <n$ running parallel to $S_{1}$ and $S_{n}$, each having one endpoint in $](0,1), s_{n}[$ and the other in $]s_{1}, (1,0)[$. Therefore, $B$ is a union of bands $b_{i}$ which are bounded by consecutive segments $S_{i}$ and $S_{i+1}$, $1 \leq i \leq n-1$, and intervals in $[(0,1), s_{n}]$ and $[s_{1}, (1,0)]$ between the endpoints of $S_{i}$ and $S_{i+1}$. Since $T(p,q)$ is a nontrivial torus knot, $T(p,q) \cap b_{i} \neq \emptyset$ for all $1 \leq i \leq n-1$. Since $T(p,q)$ follows both the longitude and the meridian of the torus with positive orientation and since $T(r,-s)$ follows the longitude with positive and the meridian with negative orientation, $T(p,q) \cap b_{i}$ does not run parallel to $S_{i}$ or $S_{i+1}$, and we conclude that there exists a connected component of $T(p,q) \cap b_{i}$ which connects $S_{i}$ and $S_{i+1}$. This is true for all $b_{i}, 1 \leq i \leq n-1$, hence a path from $S_{1}$ to $S_{n}$ along $T(p,q) \otimes T(r,-s)$ inside $]0,1[ \times ]0,1[$ is found. Note that this path does not intersect the boundary of the rectangle since no vertex of $T(p,q) \otimes T(r,-s)$ lies on the boundary of the rectangle by assumption. Extending the path along $S_{1}$ towards $(0,1)$ and along $S_{n}$ towards $(1,0)$, gives the desired path $\pi$. 
(Fig.~\ref{ErsterFall}, left).
\begin{figure}[th]
	\centering
\def\svgwidth{350pt}
	 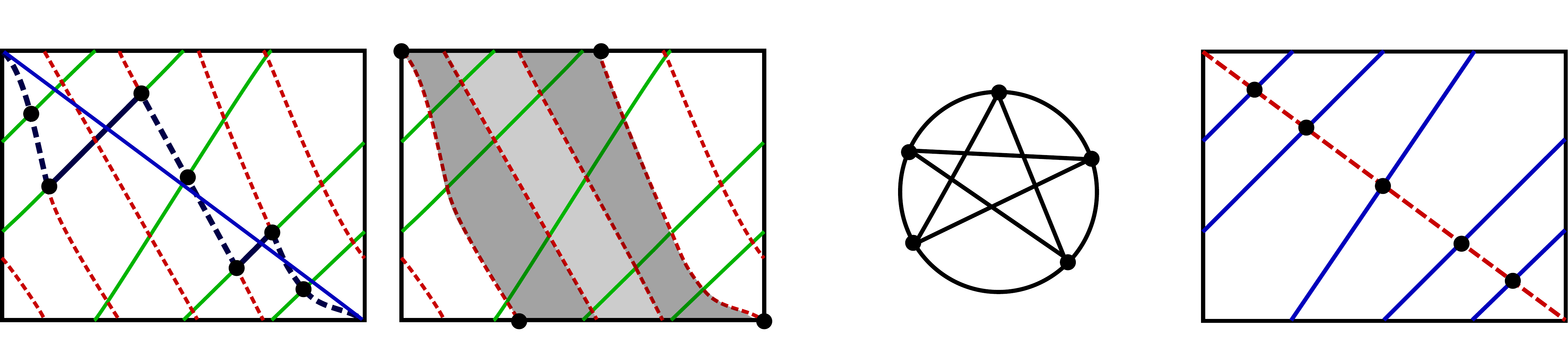
\vspace*{8pt}
\caption{Illustration for Lemma~\ref{L1}. Left: $T(p,q) \otimes T(r,-s)$ with $p=2, q=3, r=2, s=5$. Here $T(p,q)$ is drawn in light green (full), $T(r,-s)$ in red (dashed), $T(1,-1)$ in blue (full diagonal). A possible path $\pi$ is drawn in dark blue (very fat). Second: Showing the existence of the path $\pi$. $B$ is the  shaded region. Third: The graph $K_5=S(2,3)$.
Right: $K_5$ represented as $T(2,3) \otimes T(1,-1)$ is a minor of a spatial graph in the torus which contains both knots $T(p,q)$ and $T(r,-s)$. Blue (full): $T(2,3)$, red (dashed): $T(1,-1)$.}
\label{ErsterFall}
\end{figure}

Now construct a minor of $T(p,q) \otimes T(r,-s)$ as follows: Delete the edges of $T(r,-s)$ which are not part of $\pi$, and contract the edges of $T(p,q) \cap \pi$. This gives the spatial graph $T(p,q) \otimes T(1,-1)$, which is an embedding of the abstract graph~$S(p,q)$ (compare Fig.~\ref{ErsterFall2}). $S(p,q)$ can be described as a cycle $C$ on which $p+q$ vertices $\{v_{0}, \dots v_{p+q-1} \}$ are placed, together with additional $p+q$ edges $c_{j}$ which are chords connecting the vertices $v_{j}$ and $v_{j+p}\text{mod}(p+q)$, $0 \leq j \leq (p+q-1)$. Note that $S(p,q)=S(q,p)$.

It is left to show that $K_5$ is a minor of $S(p,q)$, which itself is a minor of $T(p,q) \otimes T(r,-s)$ (Fig.~\ref{ErsterFall2}).
Note that $T(q,p) \otimes T(1,-1)$ leads to the same graph $S(p,q)$, allowing us to restrict the argument to graphs $T(p,q) \otimes T(1,-1)$ with $p<q$. Also, recall that $p,q$ are relatively prime integers and $p \geq 2$. The argument works inductively. We present the argument in its entirety and add the details of the two single steps `taking a minor' and `inversion plus renaming' at the end of the proof.

Set $p_{0}=p,q_{0}=q$. If $p_0=2, q_0=3$, we are done since $S(2,3)=K_5$. For $p_{i}>2$, we construct a proper minor (i.e., a minor which is not the graph itself) of $S(p_{i},q_{i})$ which is of the form $S(p_{i,M}, q_{i,M})$ with $p_{i,M}=2 , q_{i,M}=2 \left \lfloor \frac{p_{i}+q_{i}}{p_{i}} \right\rfloor -1$.
By interchanging the roles of the cycle $C_{i,M}$ from $S(p_{i,M}, q_{i,M})$ and its chords, another representation $S(p'_{i,M}, q'_{i,M})$ of $S(p_{i,M}, q_{i,M})$ is obtained. This is possible because the chords are not edges of the cycle $C_{i,M}$ but form a closed cycle themselves by construction: $S(p,q)$ is the Cayley graph of the cyclic group $(\mathbb{Z}/(p+q),+)$ with $p+q$ elements and generator set $\{1, p\}$. Since $p$ is a generator of $\mathbb{Z}/(p+q)$ and $p,q$ are relatively prime integers, there exists a unique integer $0 \leq k<p+q$ for each $0 \leq j \leq p+q-1$ such that $k p \,\text{mod}(p+q)=j$. Relabel the vertices of the graph by $v_{j} \mapsto v_{k}$.  In particular, there exists $p'$ with $p' p \; \text{mod}(p+q)=1$, and if we set $(q+p)-p'=q'$, the relabeled graph is of the form $S(p', p'+((q+p)-p'))=S(p', p'+q')$.
$S(p'_{i,M}, p'_{i,M})$ and $S(p_{i,M}, q_{i,M})$ satisfy the relations $p_{i,M}p'_{i,M}\text{mod}(p_{i,M}+q_{i,M}) = 1$ and $q_{i,M}q'_{i,M}\text{mod}(p_{i,M}+q_{i,M}) = 1$. We show that we have either $p'_{i,M} \neq 2 \neq q'_{i,M}$ or we have $p_{i,M} = 2$ and $q_{i,M} = 3$. In the second case we are done, whereas in the first case we have completed the generic induction step: We now consider $S(p_{i+1},q_{i+1})$, which is obtained from $S(q'_{i,M}, p'_{i,M}) = S(p'_{i,M}, q'_{i,M})$ by setting $p_{i+1} := q'_{i,M} , q_{i+1} :=p'_{i,M}$. At each step, we reduce the numbers of vertices and edges in the graph, namely $p_{i}+q_{i}$ and $2(p_{i}+q_{i})$, respectively, when a minor is taken, i.e., $p_{i-1,M}+q_{i-1,M}=p_{i}+q_{i}>p_{i,M}+q_{i,M}=p_{i+1}+q_{i+1}$. In particular, since $p_{i,M}=2$, it follows that $q_{i,M}>q_{i+1,M}$ and that  for all $i$ the $q_{i,M}$ are odd numbers. We stop as soon as we either reach $p_{i}=2, q_{i}=3$ and the procedure does not give a proper minor, i.e., $S(p_{i}, q_{i})=S(p_{i,M}, q_{i,M})$, or we have $p_{i,M}=2$ and $q_{i,M}=3$, in which case $S(p_{i,M}, q_{i,M})=S(p_{i+1}, q_{i+1})$. In any case, we reach $S(2,3)$ after finitely many steps: Since $S(2,3)=K_5$, we are done. The steps of the iteration can be pictured as follows: $$S(p_{i}, q_{i})\,\, \underrightarrow{\text{minor}}\,\, S(p_{i,M}, q_{i,M})\, \,\underrightarrow{\text{invert}}\, \,S(p'_{i,M}, q'_{i,M}) = S(q'_{i,M}, p'_{i,M})\, \,\underrightarrow{\text{rename}}\,\, S(p_{i+1}, q_{i+1}).$$

`Taking a minor': obtaining $S(p_{i,M}, q_{i,M})$ from $S(p_{i},q_{i})$ (Fig.~\ref{ErsterFall2}):
Take the subgraph of $S(p_{i},q_{i})$ which consists of $C_{i}$ and the first $2 \left\lfloor \frac{p_{i}+q_{i}}{p_{i}} \right\rfloor +1$ chords. This is precisely the number of chords needed such that each chord is intersected by exactly two other chords. We obtain the formula as follows: If $p_{i}=2$, we must do exactly two full turns along chords. Only for $p_{i}=2$, the endpoints of the chords from the second round lie exactly in the middle of the points to which the chords from the first round are attached. If the endpoints of the chords from the second round are closer to the endpoints than to the starting points of the chords from the first round, we have to stop traveling along chords just after completing the second round in order to ensure that every chosen chord is intersected by exactly two other chords. If the endpoints of the chords from the second round are closer to the starting points than to the endpoints of the chords from the first round, we have to stop traveling along chords just before finishing the second round. We can in fact determine in which case we are: $\left\lfloor \frac{p_{i}+q_{i}}{p_{i}} \right\rfloor$ is the number of chords before finishing the first round. Therefore, $\left\lfloor \frac{p_{i}+q_{i}}{p_{i}} \right\rfloor p_{i}$ is the index of the endpoint of the last chord from the first round. If this vertex is closer to $v_{0}$ than to $v_{q_{i}+1}$, we are in the first case, i.e., $(p_{i}+q_{i}) -\left \lfloor \frac{p_{i}+q_{i}}{p_{i}} \right \rfloor p_{i} < \left \lfloor \frac{p_{i}}{2}\right \rfloor$. Similarly, we are in the second case if $(p_{i}+q_{i}) - \left\lfloor \frac{p_{i}+q_{i}}{p_{i}}\right \rfloor p_{i} >\left \lfloor \frac{p_{i}}{2}\right \rfloor$. If $(p_{i}+q_{i}) -\left \lfloor \frac{p_{i}+q_{i}}{p_{i}} \right\rfloor p_{i} = \left\lfloor \frac{p_{i}}{2} \right\rfloor$, we have $p_{i}=2$. Hence, in both cases, we have traveled through the first $2 \left \lfloor \frac{p_{i}+q_{i}}{p_{i}} \right \rfloor +1$ chords $c_{0}, \dots c_{2\left \lfloor \frac{p_{i}+q_{i}}{p_{i}}\right \rfloor}$. \\
To obtain $S(p_{i,M}, q_{i,M})$ from $S(p_{i},q_{i})$ first delete the chords $c_{2\left \lfloor \frac{p_{i}+q_{i}}{p_{i}}\right \rfloor+1}, \dots , c_{p_{i}+q_{i}-1}$. Then contract iteratively all edges of the cycle $C_{i}$ which have an endpoint that is not an endpoint of a remaining chord. Contract the edge in $C_{i}$ which connects $v_{0}$ and the endpoint of $c_{2\left \lfloor \frac{p_{i}+q_{i}}{p_{i}} \right\rfloor}$. The resulting graph has $2 \left\lfloor \frac{p_{i}+q_{i}}{p_{i}}\right \rfloor +1$ vertices and $2(2\left \lfloor \frac{p_{i}+q_{i}}{p_{i}} \right\rfloor +1)$ edges by construction. Again, $2\left \lfloor \frac{p_{i}+q_{i}}{p_{i}} \right\rfloor +1$ edges form a new cycle $C_{i,M}$, whereas the other edges are chords $c_{j}$ connecting the vertices $v_{j}$ and $v_{j+2}\text{mod}(2+q_{i,M})$ for $0 \leq j \leq (2+q_{i,M}-1)$, where $q_{i,M}=2 \left\lfloor \frac{p_{i}+q_{i}}{p_{i}}\right \rfloor -1$. Therefore, the graph is of the form $S(p_{i,M}, q_{i,M})$ and $p_{i,M}=2$.

`Inversion plus renaming': obtaining $S(p_{i+1}, q_{i+1})$ from $S(p_{i,M}, q_{i,M})$ (Fig.~\ref{ErsterFall2}):
Since the chords are not edges of $C_{i,M}$ but form a closed cycle, we can change the roles of the chords and the cycle while leaving the unlabeled graph unchanged. This leads to a representation of the graph in terms of $p'_{i,M}$ and $q'_{i,M}$, which are given by $p_{i,M}p'_{i,M}\text{mod}(p_{i,M}+q_{i,M}) = 1$ and $q_{i,M}q'_{i,M}\text{mod}(q_{i,M}+q_{i,M}) = 1$, as explained above. Note that $p'_{i,M} > q'_{i,M}$ since $p_{i,M}< q_{i,M}$ and $p_{i,M}+q_{i,M}=p'_{i,M}+q'_{i,M}$. To continue the induction we need to show that $p'_{i,M} \neq 2$ as well as $q'_{i,M} \neq 2$ unless $p_{i,M} = 2$ and $q_{i,M} = 3$, in which case we have already obtained the final graph $S(2,3)$. Thus, if $p'_{i,M}=2$, we have $p_{i,M}p'_{i,M}\text{mod}(p_{i,M}+q_{i,M}) = 2 \cdot 2\text{mod}(2+q_{i,M}) =1$, implying $q_{i,M} =1$, which contradicts the assumption that $q_{i,M} >1$. On the other hand, if $q'_{i,M}=2$, we have $q_{i,M}q'_{i,M}\text{mod}(p_{i,M}+q_{i,M}) = 1$, which is equivalent to $q_{i,M}q'_{i,M}= n(p_{i,M}+q_{i,M}) +1$ for an integer $n$. Since $q'_{i,M}=2$ by assumption, this implies $2q_{i,M}= n(p_{i,M}+q_{i,M})+1$, and since $p_{i,M}=2$ and $q_{i,M}$ is a positive odd integer, this is equivalent to $q_{i,M} = \frac{2n+1}{2n-1}$with $n=1$. Consequently, $q_{i,M}=3$ and therefore $S(p_{i,M}, q_{i,M})= S(2,3)$. 

Hence, $K_5$ is a minor of $S(p,q)$, which in turn is a minor of $T(p,q) \otimes T(r,-s)$. The obtained embedding of $K_5$ in the torus is $T(2,3) \otimes T(1,-1)$. This completes the proof.
\end{proof}
\begin{figure}[th]
	\centering
\def\svgwidth{420pt}
	 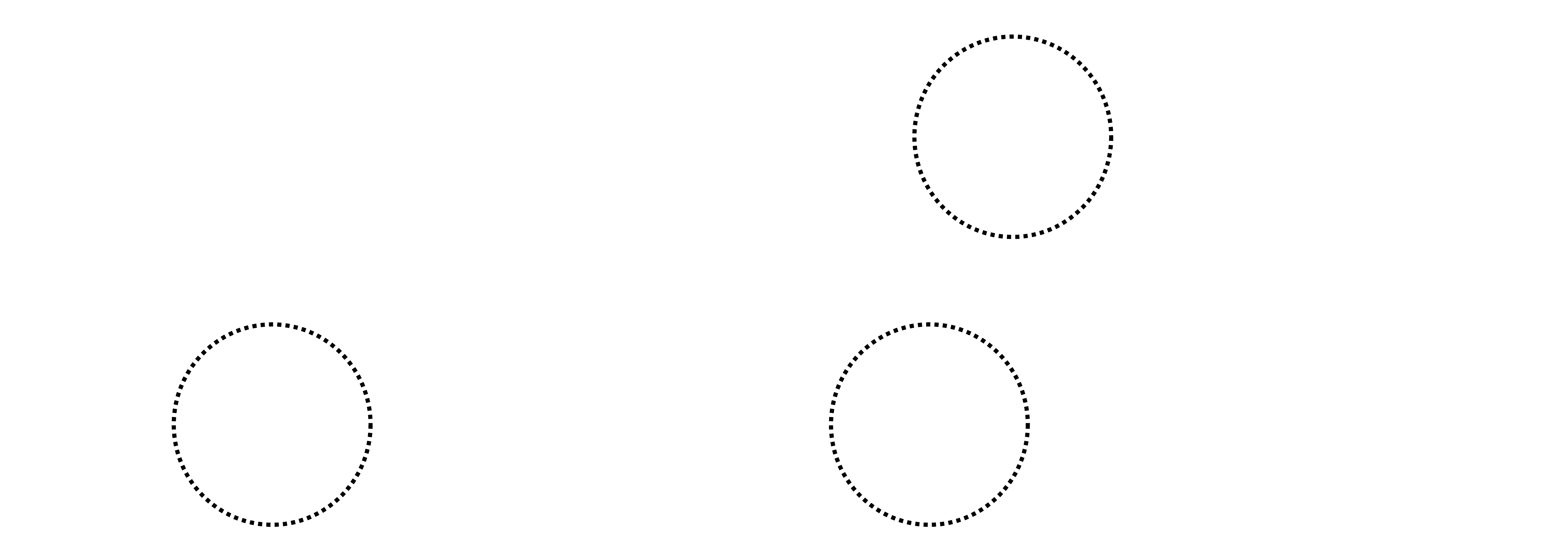
\vspace*{8pt}
\caption{Illustration for Lemma~\ref{L1}. Top: the graph $S(p,q)$ and examples for the cases $(p+q) - \lfloor \frac{p+q}{p} \rfloor p = \lfloor \frac{p}{2} \rfloor$, $(p+q) - \lfloor \frac{p+q}{p} \rfloor p > \lfloor \frac{p}{2} \rfloor$, $(p+q) - \lfloor \frac{p+q}{p} \rfloor p < \lfloor \frac{p}{2} \rfloor$.
Bottom: The procedure of constructing the minor $K_{5}$ from $S(3,11)$. The dashed edges on the circle are contracted and dashed chords are deleted.
 }
\label{ErsterFall2}
\end{figure}
\begin{lemma}{\hspace{1mm}}\label{L2}\\
\textit{$K_{3,3}$ is a minor of every spatial graph on the torus which contains both links $T(kp,kq)$ and $T(kp,-kq)$ for some relatively prime integers $p$ and $q$ and some $k \geq 2$.}
\end{lemma}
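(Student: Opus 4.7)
The approach closely parallels Lemma~\ref{L1}, with the key modification being that the multi-component structure of the two links (each having $k \geq 2$ parallel components on the torus) is exploited to produce $K_{3,3}$ rather than $K_5$. Since $\gcd(p,q) = 1$, we have $\gcd(kp,kq) = k$, so $T(kp,kq)$ decomposes on the torus into $k$ parallel copies of the torus knot $T(p,q)$, and similarly $T(kp,-kq)$ decomposes into $k$ parallel copies of $T(p,-q)$.

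The first step is to embed both links on the fundamental square $[0,1] \times [0,1]$ with the minimal number of intersections (namely $|kp\cdot(-kq) - kq\cdot kp| = 2k^{2}pq$) and form the spatial graph $T(kp,kq) \otimes T(kp,-kq)$ by placing a vertex at each intersection point. Since $k \geq 2$, I then delete all but two parallel components from each link (removing the edges and the intersection vertices that lie only on the discarded components), leaving the spatial subgraph $T(2p,2q) \otimes T(2p,-2q)$. Because minors are inherited by subgraphs, it suffices to exhibit $K_{3,3}$ as a minor of this reduced graph.

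For the base case $p = q = 1$, direct computation with two parallel $(1,1)$-curves and two parallel $(1,-1)$-curves on the fundamental square produces $8$ intersection vertices, each of degree four. Writing out the cyclic order of intersections along each of the four curves, one checks that the underlying abstract graph is isomorphic to the complete bipartite graph $K_{4,4}$ (the bipartition is determined by the parity of an affine coordinate at each intersection), which contains $K_{3,3}$ as a subgraph.

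For general $p$ and $q$, each of the four chosen components carries $4pq$ vertices alternating cyclically between intersections with the two opposite-slope components. My plan is to contract arcs on each component down to four carefully chosen vertices (two intersections with each opposite-slope curve, placed symmetrically in the cyclic order) so that the resulting minor is combinatorially isomorphic to the $p=q=1$ base case. I expect the main obstacle to be verifying this last step in detail: one must select the surviving vertices consistently on all four components so that the contractions preserve the required bipartite alternation pattern simultaneously on each curve, and if a single round of contractions does not suffice one can iterate, in the spirit of the $S(p,q) \to S(2,3) = K_{5}$ reduction in Lemma~\ref{L1}, to descend successively in $p+q$ until the base case is reached.
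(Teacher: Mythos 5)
Your opening moves match the paper: you pass to two parallel components of each link, and your identification of the $p=q=1$ configuration (two parallel $(1,1)$-curves and two parallel $(1,-1)$-curves giving $K_{4,4}$) is correct. But the proof is not complete, because everything for general $p,q$ is deferred to the final step, and that step is precisely where the content of the lemma lies. You assert that one can contract arcs on each of the four components down to four ``carefully chosen'' vertices so that the result is combinatorially the base case, but you give no rule for choosing them and no argument that a consistent choice exists: contracting an arc of $c'_1$ merges all the intermediate intersection vertices (which carry edges along $c_1$ and $c_2$) into its endpoints, and whether the resulting identifications produce $K_{4,4}$, a multigraph containing it, or something degenerate depends on the cyclic interleaving pattern of the four curves, which is exactly the $(p,q)$-dependent combinatorics you have not analysed. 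The fallback of iterating ``in the spirit of Lemma~\ref{L1}'' is not a proof either --- the $S(p,q)\to S(2,3)$ descent is a specific, carefully verified induction on a one-curve configuration, and nothing analogous has been set up for the four-curve arrangement here.

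The gap is also avoidable: no contraction and no induction are needed. The paper finds $K_{3,3}$ \emph{locally}, inside a single cell of the arrangement. Take consecutive intersection points $p_1,p_2$ of $c'_1$ with $c_1$, the neighbouring parallel segment $(p_3,p_4)$ of $c'_2$ meeting $c_1$ at $p_3,p_4$, and the two arcs $(p_1,p_3)$, $(p_2,p_4)$ of $c_1$; these bound a quadrilateral disc. Because $c_2$ is parallel to $c_1$, it crosses this disc in a single arc, entering at a point $p_5$ on $(p_1,p_2)$ and leaving at a point $p_6$ on $(p_3,p_4)$. The six points $p_1,\dots,p_6$ joined by the nine arcs $(p_1,p_3)$, $(p_3,p_2)$, $(p_2,p_4)$, $(p_4,p_1)$, $(p_1,p_5)$, $(p_5,p_2)$, $(p_3,p_6)$, $(p_6,p_4)$, $(p_5,p_6)$ form a subdivision of $K_{3,3}$ with parts $\{p_1,p_2,p_6\}$ and $\{p_3,p_4,p_5\}$, already a subgraph of $T(kp,kq)\otimes T(kp,-kq)$, uniformly in $p$, $q$ and $k$. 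If you want to salvage your global reduction instead, you must supply the explicit selection of the four surviving vertices per component and verify the alternation claim for arbitrary coprime $p,q$; as written, that verification is the missing proof.
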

\begin{proof}
Let $p$ and $q$ be relatively prime integers and let $k \geq 2$. Then $T(kp,-kq)$ contains two different connected components of the form $T(p,-q)$, which we denote by $c_1$ and $c_2$, respectively. The components $c_1$ and $c_2$ run parallel on the torus. Likewise, $T(kp,kq)$ contains two different connected components denoted by $c'_1$ and $c'_2$ of the form $T(p,q)$, which run parallel to each other (see Fig.~\ref{FallZwei} for notations). Since $T(p,q)$ follows the longitude and the meridian of the torus with the same orientation and $T(p,-q)$ follows the longitude and the meridian with opposite orientation, $c'_1$ intersects $c_1$ in at least two points. Take one segment $(p_{1}, p_{2})$ of $c'_1$ which intersects $c_1$ only in its endpoints denoted by $p_1$ and $p_2$. If one follows $c_1$ from $p_1$, the next intersection point denoted by $p_3$ is with $c'_2$. Let $(p_{1}, p_{3})$ be the segment of $c_1$. Then there starts a segment $(p_{3}, p_{4})$  of $c'_2$ which runs parallel to $(p_{1}, p_{2})$ and intersects $c_1$ only in its endpoints $p_3$ and $p_4$. Let $(p_{2}, p_{4})$ be the segment of $c_2$ which runs from $p_2$ to $p_4$. This determines a disc in the torus which is bounded by the edges $(p_{1}, p_{2})$, $(p_{2}, p_{4})$, $(p_{3}, p_{4})$, and $(p_{1}, p_{3})$. Since $c_2$ runs parallel to $c_1$, it cuts the boundary of the disc in exactly two points $p_5$ on $(p_{1}, p_{2})$ and $p_6$ on $(p_{3}, p_{4})$. Following $c_1$, one passes through the points $p_1$, $p_3$, $p_2$, $p_4$ in this order. This determines the subgraph with the six vertices $p_{1}, \dots , p_{6}$ and the edges $(p_{1}, p_{3})$, $(p_{3}, p_{2})$, $(p_{2}, p_{4})$, $(p_{4}, p_{1})$, $(p_{1}, p_{5})$, $(p_{5}, p_{2})$, $(p_{3}, p_{6})$, $(p_{6}, p_{4})$ and $(p_{5}, p_{6})$, which is precisely $K_{3,3}$. Being a subgraph, $K_{3,3}$ is in particular a minor of every spatial graph on the torus which contains both links $T(kp,kq)$ and $T(kp,-kq)$.  
\end{proof}

\begin{figure}[th]
	\centering
\def\svgwidth{350pt}
	 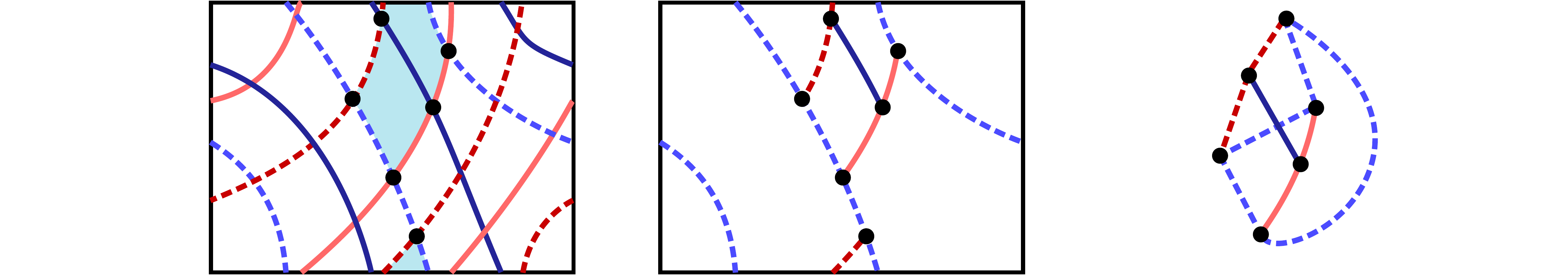
\vspace*{8pt}
\caption{Illustration of Lemma~\ref{L2}. Left: the spatial graph $T(kp,kq) \otimes T(kp,-kq)$, with \mbox{$p=1, q=2, k=2$}. Middle and right: $K_{3,3}$ is a minor  of $T(kp,kq) \otimes T(kp,-kq)$.}
\label{FallZwei}
\end{figure}

\begin{lemma}{\hspace{1mm}}\label{L3}\\
\textit{$K_{3,3}$ is a minor of every achiral spatial graph on the torus which contains the Hopf ladder~$H_3$.}
\end{lemma}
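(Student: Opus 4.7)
The plan is to invoke chirality of $H_{3}$ (Proposition~\ref{prop}) together with the necessary condition in Remark~\ref{K} to produce a second, oppositely-slanted Hopf link on the torus inside $\G$, and then to reduce to Lemma~\ref{L2}. First I would let $h$ be an orientation-reversing self-homeomorphism of $\R^{3}$ with $h(\G)=\G$, and observe that $h(H_{3})$ is a spatial subgraph of $\G$ that is ambient isotopic to the mirror image $H_{3}^{\star}$. Because $h(\G)=\G\subset T^{2}$, this mirror copy of the Hopf ladder sits on the same torus as the original $H_{3}$.

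Next I would identify the sides (the two Hopf-link components) of $H_{3}$ and of $h(H_{3})$ as curves on $T^{2}$. A Hopf link embedded in the standard torus consists of two disjoint simple closed curves on $T^{2}$, which are necessarily parallel of some common slope $(p,q)$ with $\gcd(p,q)=1$; and for the standard embedding $T^{2}\subset\R^{3}$, the linking number of two such parallel curves is $pq$. Being a Hopf link forces $pq=\pm1$, i.e.\ $(p,q)=(1,\pm1)$, so the sides of any Hopf ladder on $T^{2}$ form either $T(2,2)$ or $T(2,-2)$. Because $H_{3}$ is chiral, these two torus realisations must correspond to $H_{3}$ and to its mirror $H_{3}^{\star}$, respectively. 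Hence the sides of $H_{3}$ inside $\G$ give one of $T(2,\pm2)$ and the sides of $h(H_{3})\cong H_{3}^{\star}$ inside $\G$ give the other, so $\G$ on $T^{2}$ contains both $T(2,2)$ and $T(2,-2)$ as sublinks.

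At that point the conclusion is immediate from Lemma~\ref{L2} with $p=q=1$ and $k=2$, which delivers $K_{3,3}$ as a minor of $\G$. The one step requiring care is the slope identification in the second paragraph: one must know that disjoint essential simple closed curves on $T^{2}$ are parallel, compute the ambient linking number of two parallel $(p,q)$-curves on the standard torus to be $pq$, and match the two torus realisations $T(2,\pm2)$ of the Hopf link to the chiral pair $H_{3}, H_{3}^{\star}$. Once these torus-geometric facts are in hand, Lemma~\ref{L3} is a direct corollary of the already-established Lemma~\ref{L2}.
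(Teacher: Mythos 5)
There is a genuine gap: the claim that the sides of $h(H_3)$ must form the oppositely-slanted Hopf link $T(2,-2)$ does not follow, and is in fact false in general. The chirality of $H_3$ is not carried by the slope of its sides on $T^2$ but by the orientation that the three rungs induce on the two link components via the cyclic order of their attachment points. The unoriented Hopf link is amphichiral, and the two sides of $T(2,2)$ cut the torus into \emph{two} annuli; by distributing disjoint rungs between these two annuli one can reverse the cyclic order of the attachment points, so $H_3^{\star}$ also embeds in $T^2$ with its sides equal to (or parallel to) the very same $T(2,2)$. Concretely, take $H_3$ together with one additional edge joining the two components as in Fig.~\ref{Hopftrafo}: this spatial graph lies on the torus, contains $H_3$, admits an achiral (symmetric) embedding, and its only nonsplit sublink is a single Hopf link $T(2,2)$ --- no $T(2,-2)$ is present. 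This directly refutes your intermediate assertion that $\G$ must contain both $T(2,2)$ and $T(2,-2)$, so the reduction to Lemma~\ref{L2} cannot be the whole proof.

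What is missing is the second case that the paper's argument handles. Either $\G$ does contain the mirror oriented Hopf link $T(2,-2)$, in which case your appeal to Lemma~\ref{L2} with $p=q=1$, $k=2$ is exactly the paper's first case; or the mirror image $H_3^{\star}$ is realized on the \emph{same} Hopf link, which forces $\G$ to contain at least one further edge between the two link components beyond the three rungs of $H_3$ (three rungs determine the orientation uniquely, so reversing it requires a new attachment). In that second case one shows directly that the abstract graph ``$H_3$ plus an extra edge $(0,Z)$ between the components'' has $K_{3,3}$ as a minor (delete the edge between $1$ and $3$, contract the edges between $0$ and $1$ and between $2$ and $3$), independently of the embedding. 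Your torus-geometric observations (disjoint essential curves are parallel, linking number $pq$, hence slope $(1,\pm 1)$) are correct and useful, but without the second case the proof does not go through.
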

\begin{proof}
Recall that a spatial graph is achiral if and only if there exists an orientation-reversing homeomorphism of $\R^{3}$ which maps the spatial graph to itself. Such a homeomorphism induces a graph isomorphism on the graph. Graph isomorphisms map cycles to cycles, and graph isomorphisms which are induced by orientation-reversing homeomorphisms invert the sign of the linking number of links and their images. 
Consequently, every achiral graph which contains an oriented link also contains a second link which is its mirror image. The sides of the Hopf ladder~$H_3$ form an oriented Hopf link, without loss of generality realized by $T(2,2)$, where the orientation is induced by the order of the endpoints of the rungs as described in the proof of Proposition~\ref{prop}. Keeping this in mind, we find that there are precisely two options to prevent a graph which contains $H_3$ as a subgraph from inheriting the chirality of the oriented Hopf link: Firstly, it can contain a second oriented link $T(2,-2)$ which is the mirror image of $T(2,2)$. Note that $T(2,-2)$ has to be added to $H_3$ since $T(2,2)$ is its only nontrivially linked subgraph. Since such an extension contains $T(2,2) \otimes T(2,-2)$, it follows from Lemma~\ref{L2} that $K_{3,3}$ is a minor of this extension. Or, secondly, the orientation of the Hopf link can be eliminated by adding a further rung to the Hopf ladder. In that case, we find that the resulting abstract graph is nonplanar, which is independent of the choice of embedding (Fig.~\ref{Hopftrafo2}).
An extra edge with vertices $0$ and $Z$ is added whose vertex $0$ lies on an edge of one component of the Hopf link  -- without loss of generality on the edge with endpoints 1 and 2, and whose vertex $Z$ lies in the interior of one of the edges with endpoint 3 on the other component of the Hopf link -- without loss of generality on the edge with vertices $B$ and $C$. 
The Hopf ladder~$H_3$ with the extra edge $(0,Z)$ contains $K_{3,3}$ as a minor as can be seen by deleting the edge between $1$ and $3$ and contracting the edges between $0$ and $1$ and between $2$ and $3$ (Fig.~\ref{Hopftrafo2}). Although not every embedding of this extension is an achiral graph, every extension of $H_3$ which does not determine an orientation of the Hopf link contains the Hopf ladder~$H_3$ with an extra edge as constructed above. It follows that every extension of $H_3$ to an achiral graph contains a minor $K_{3,3}$.
\end{proof}

\begin{figure}[th]
	\centering
\def\svgwidth{250pt}
	 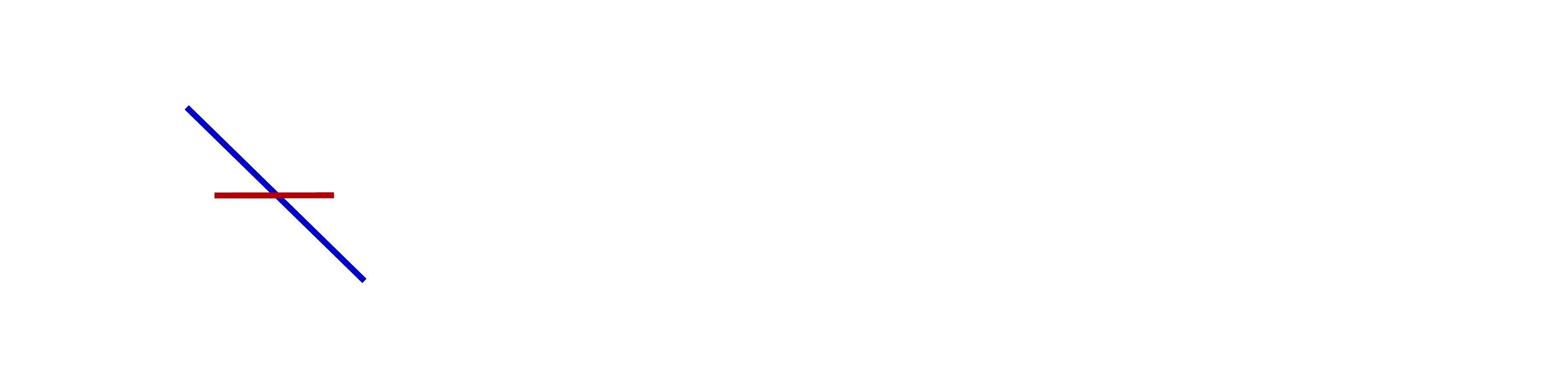
\vspace*{8pt}
\caption{Extending the abstract graph of the Hopf ladder $H_{3}$ by adding the vertices 0 and $Z$ and an edge between them as described in Lemma~\ref{L3}. The extra edge inhibits the induction of orientations on the two black circles which correspond to the Hopf link.}
\label{Hopftrafo2}
\end{figure}

\begin{remark}
It is possible to embed the Hopf ladder~$H_3$ with the extra edge $(1,Z)$ (as constructed in the proof of Lemma~\ref{L3}) as an achiral graph since there is a symmetric representation with the unoriented Hopf link mapped onto itself. The resulting spatial graph still lies on the torus but is nonplanar since $K_{3,3}$ is a minor (Fig.~\ref{Hopftrafo}). It is shown in Fig.~\ref{Hopftrafo} how one is led to adding an extra rung when constructing the symmetric representation by extending $H_3$ to an achiral graph which maps the Hopf link to its mirror image. 
\end{remark}

\begin{figure}[th]
	\centering
\def\svgwidth{300pt}
	 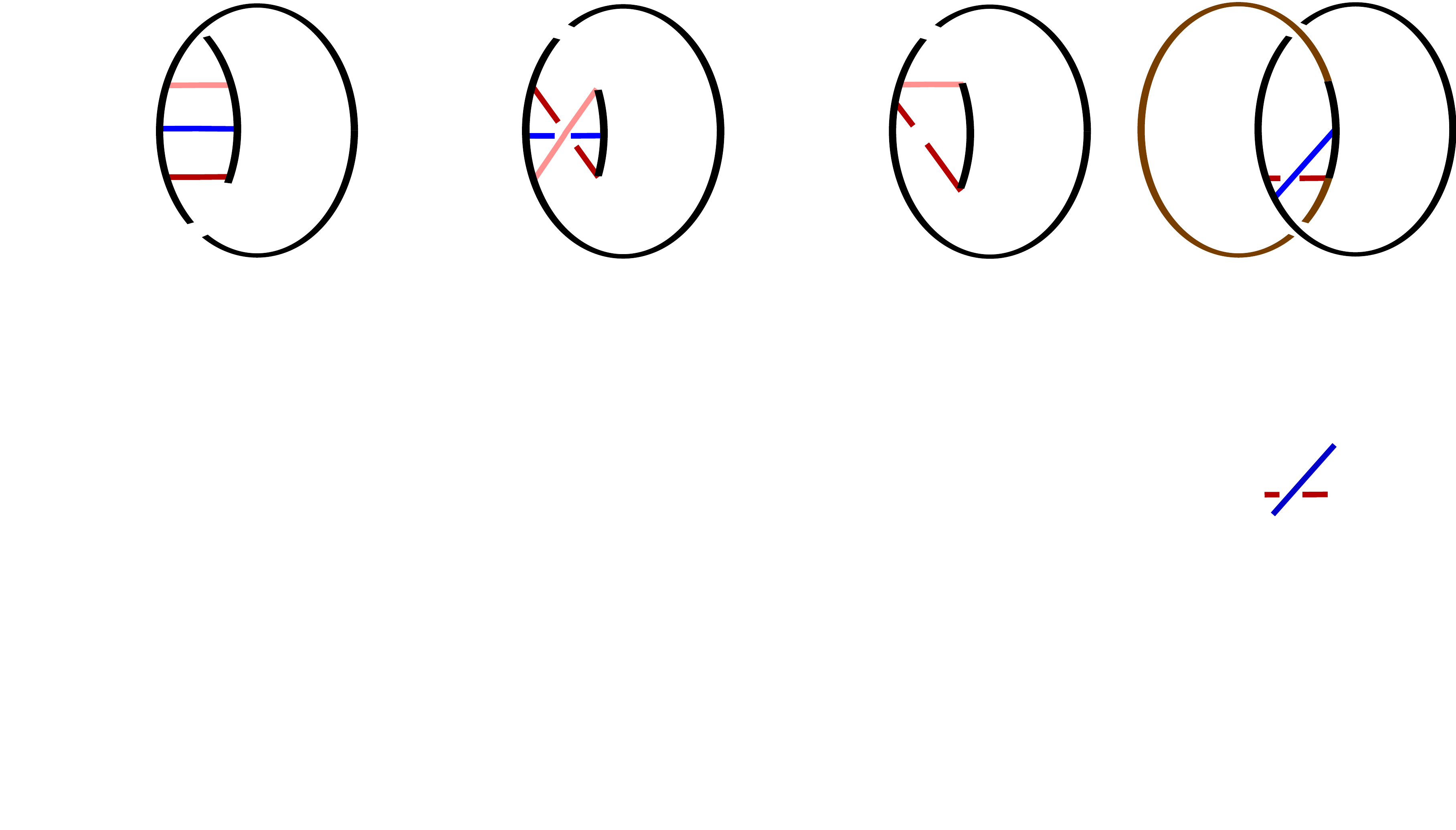
\vspace*{8pt}
\caption{Extension of the Hopf ladder $H_{3}$ to an achiral graph which contains both $H_{3}$ and its mirror image $H_{3}^{\star}$ by adding one additional edge.}
\label{Hopftrafo}
\end{figure}

We are now ready to prove Theorem~\ref{chiral}.

\begin{proof}
It follows from Theorem~\ref{goal} that the spatial graph~$\G$ contains a subgraph which is a nontrivial knot or a nonsplit link. 

\textbf{Case~1:} The spatial graph~$\G$ contains a nontrivial knot.\\
A subgraph of a spatial graph on the torus which forms a knot is a torus knot by definition. A nontrivial torus knot can be written as $T(p,q)$, for some relatively prime integers $p$ and $q$ with $|p|,|q| >1$ in order to exclude the trivial knot. A nontrivial torus knot is chiral by Theorem~\ref{Murasugi}, and its mirror image is $T(-p,q)=T(p,-q)$. If $\G$ is achiral, $\G$ contains both $T(p,q)$ and $T(p,-q)$ as subgraphs by Remark~\ref{K}.
As shown in Lemma~\ref{L1}, $K_5$ is a minor of every spatial graph which is embedded in the torus and which contains both knots $T(p,q)$ and $T(r,-s)$ for some relatively prime integers $p \geq 2,q \geq 3$ and some relatively prime integers $r\geq 1,s\geq 1$. This includes the case of a spatial graph containing both $T(p,q)$ and $T(p,-q)$ as subgraphs. The statement of Theorem~\ref{chiral} follows from Lemma~\ref{L1} since by Theorem~\ref{Kuratowski} a graph is nonplanar if and only if it contains neither $K_5$ nor $K_{3,3}$ as minors.

\textbf{Case~2:} The spatial graph~$\G$ contains a nonsplit link.\\
A subgraph of a spatial graph on the torus which forms a nonsplit link is a torus link by definition. 
A nonsplit torus link with $k$ components can be written as $T(kp,kq)$ for some relatively prime integers $p$ and $q$ with $k \geq 2$ in order to exclude the unlink.

\textbf{Case~2a:} The spatial graph~$\G$ contains a nonsplit link different from the Hopf link.\\
A nonsplit torus link which is not the Hopf link is chiral by Theorem~\ref{Murasugi}, and its mirror image is $T(-kp,kq)=T(kp,-kq)$. If $\G$ is achiral, $\G$ contains both $T(kp,kq)$ and $T(kp,-kq)$ as subgraphs by Remark~\ref{K}. As shown in Lemma~\ref{L2}, $K_{3,3}$ is a minor of every graph $\G$ which is embedded in the torus and which contains both links $T(kp,kq)$ and $T(kp,-kq)$, $k\geq 2$ as subgraphs. The statement of Theorem~\ref{chiral} now follows from Lemma~\ref{L2} and Theorem~\ref{Kuratowski}. 

\textbf{Case~2b:} The spatial graph~$\G$ contains a Hopf link.\\
In this case 3-connectivity and simpleness imply that the Hopf ladder $H_3$ with three rungs is a subgraph of $\G$.  By Proposition~\ref{prop}, $H_3$ is chiral. If $\G$ is achiral, $K_{3,3}$ is a minor of $\G$ as shown in Lemma~\ref{L3}. The statement of Theorem~\ref{chiral} now follows from Lemma~\ref{L3} and Theorem~\ref{Kuratowski}.
\end{proof}

\begin{remark}
Simpleness and 3-connectivity in Theorem~\ref{chiral} are only needed for the case in which $\G$ contains no nontrivial knot and the only nontrivially linked subgraph forms a Hopf link. If the spatial graph contains a nontrivial knot or a nonsplit link different from the Hopf link, these two assumptions can be dropped.
\end{remark}

\begin{remark}
It is not possible to weaken the assumptions of Theorem~\ref{chiral}. Counterexamples are given or can be constructed as in \cite{Hyde}.
\end{remark}

\section*{Acknowledgments}
I thank Matt Rathbun, Stephen Hyde and the anonymous reviewer for helpful comments and discussions. I thank David Rottensteiner for proof-reading and helping me organize the proof of Lemma~\ref{L1}. The early stage of this research was supported by the Roth studentship of Imperial College London mathematics department, the DAAD, and the Evangelisches Studienwerk.

\end{document}